\documentclass[reqno]{amsart}
\usepackage{amsmath, amsthm, amssymb, amstext}

\usepackage{hyperref,xcolor}
\hypersetup{
 pdfborder={0 0 0},
 colorlinks,
}
\usepackage{enumitem}
\setlength{\parindent}{1.2em}

\newtheorem{theorem}{Theorem}
\newtheorem{remark}[theorem]{Remark}
\newtheorem{lemma}[theorem]{Lemma}



              %
              %
              %
\DeclareMathOperator*{\divergenz}{div}              %

\newcommand{\R}{\mathbb{R}}

\newcommand{\RN}{\mathbb{R}^N}

\newcommand{\Lp}[1]{L^{#1}(\Omega)}

\newcommand{\Wpzero}[1]{W^{1,#1}_0(\Omega)}

\newcommand{\eps}{\varepsilon}

\newcommand{\rand}{\partial\Omega}

\newcommand{\into}{\int_{\Omega}}

\newcommand{\Linf}{L^{\infty}(\Omega)}

\renewcommand{\l}{\left}
\renewcommand{\r}{\right}
\numberwithin{theorem}{section}
\numberwithin{equation}{section}


\title[$L^\infty$-bounds for general singular elliptic equations]{$L^\infty$-bounds for general singular elliptic equations with convection term}

\author[G.\,Marino]{Greta Marino}
\address[G.\,Marino]{Technische Universit\"{a}t Chemnitz, Fakult\"{a}t f\"{u}r Mathematik, Reichenhainer Stra\ss e 41, 09126 Chemnitz, Germany}
\email{greta.marino@mathematik.tu-chemnitz.de}

\author[P.\,Winkert]{Patrick Winkert}
\address[P.\,Winkert]{Technische Universit\"{a}t Berlin, Institut f\"{u}r Mathematik, Stra\ss e des 17.\,Juni 136, 10623 Berlin, Germany}
\email{winkert@math.tu-berlin.de}

\subjclass[2010]{35J75, 35B45}

\keywords{A priori bounds, bootstrap arguments, Moser iteration, nonhomogeneous operator, singular problems}

\begin{document}

\begin{abstract}
    In this note we present $L^\infty$-results for problems of the form
    \begin{align*}
	\begin{aligned}
	    -\divergenz \mathcal A(x, u, \nabla u)&= \mathcal B(x, u, \nabla u) \qquad && \text{in } \Omega, \\
	    u&> 0 && \text{in } \Omega, \\
	    u&= 0 && \text{on } \rand,
	    \end{aligned}
    \end{align*}
    where the growth condition for the function $\mathcal{B}\colon \Omega \times \R\times \R^N\to \R$ contains both a singular and a convection term. We use ideas from the works of Giacomoni-Schindler-Tak\'{a}\v{c} \cite{Giacomoni-Schindler-Takac-2007} and the authors \cite{Marino-Winkert-2019} to prove the boundedness of weak solutions for such general problem by applying appropriate bootstrap arguments.
\end{abstract}

\maketitle

\section{Introduction and Assumptions}

Let $ \Omega \subset \RN$ be a bounded domain with a Lipschitz boundary $\rand$. In this paper, we are concerned with the following problem
\begin{align}\label{problem}
    \begin{aligned}
	-\divergenz \mathcal A(x, u, \nabla u)&= \mathcal B(x, u, \nabla u) \qquad && \text{in } \Omega, \\
	u&> 0 && \text{in } \Omega, \\
	u&= 0 && \text{on } \rand,
    \end{aligned}
\end{align}
where we assume the subsequent hypotheses:
\begin{enumerate}
    \item[(H)] 
	The functions $ \mathcal A\colon \Omega \times \R \times \RN \to \R^N $ and $ \mathcal B\colon \Omega \times \R \times \RN \to \R $ are supposed to be Carath\'eodory functions such that
	\begin{align*}
	    &\qquad&\text{(H1)}  
	    & \quad | \mathcal A(x, s, \xi)| \le a_1 | \xi |^{p-1}+ a_2 | s |^{q \frac{p-1}{p}}+ a_3,  
	    & & \text{for a.a. } x \in \Omega, \\
	    &&\text{(H2)} 
	    &\quad \mathcal A(x, s, \xi) \cdot \xi \ge a_4 | \xi |^p, 
	    & & \text{for a.a. } x \in \Omega, \\
	    &&\text{(H3)} 
	    &\quad | \mathcal B(x, s, \xi) | \le b_1 | \xi |^{p \frac{q-1}{q}}+ b_2 | s |^{-\delta}+ b_3|s|^{q-1}+b_4, 
	    & & \text{for a.a. } x \in \Omega,
	\end{align*}
	for all $ s \in \R$, for all $ \xi \in \RN$, with nonnegative constants $ a_i, b_j \, (i,j \in \{1, \dots, 4\})$ and fixed numbers
	\begin{align}\label{delta}
	    1< p< \infty, \qquad p\leq q\leq p^*\qquad \text{and} \qquad 0< \delta< 1.
	\end{align}
\end{enumerate}
By $p^*$ we denote the critical exponent of $p$, that is
\begin{align*}
    p^*=
    \begin{cases}
	\frac{Np}{N-p} &\text{if }p<N,\\
	\text{any }r\in(1,\infty)&\text{if }p\geq N.
    \end{cases}
\end{align*}
From \eqref{delta} we see that the case $q=p^*$ is allowed and so critical growth can occur.

We are interested in a priori bounds for weak solutions of problem \eqref{problem}. By a weak solution we mean a function $u \in W^{1,p}_0(\Omega)$ such that $u> 0$ a.e.\,in $ \Omega$ and
    \begin{align}\label{weak}
	\into \mathcal A(x, u, \nabla u) \cdot \nabla v\, dx= \into B(x, u, \nabla u) v\, dx
    \end{align}
is satisfied for all test functions $v \in W^{1,p}_0(\Omega)$. Taking into account hypotheses (H) we see that we have a well-defined weak solution. Note that for the second term on the right-hand side of (H3) the weak solution is by definition a function $u \in \Wpzero{p}$ such that $u^{-\delta}v \in \Lp{1}$ for every $v \in \Wpzero{p}$. This is a natural definition consistent with the classical definition of a weak solution.

Our main result reads as follows.

\begin{theorem}\label{theorem}
    Let $ \Omega \subset \RN$ be a bounded domain with Lipschitz boundary $\partial\Omega$ and let hypotheses (H) be satisfied. Then, any weak solution $u \in W^{1,p}_0(\Omega)$ of \eqref{problem} belongs to $ \Linf$.
\end{theorem}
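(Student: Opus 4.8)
The plan is to run a Moser-type iteration based on truncated powers of $u$ as test functions, using Young's inequality to tame the convection term and a Brezis--Kato-type splitting to control the possibly critical power $|s|^{q-1}$. Throughout, for $M>0$ set $u_M:=\min\{u,M\}$ and, for a parameter $\kappa\ge 1$, use
$v:=u\,u_M^{p(\kappa-1)}$ as test function in \eqref{weak}. Since $u_M\le M$ one checks directly from (H1)--(H3) and $u\in\Wpzero{p}\subset L^{p^*}(\Omega)$ that $v\in\Wpzero{p}$ is admissible; in particular the singular term is integrable because $u^{-\delta}v=u^{1-\delta}u_M^{p(\kappa-1)}\le M^{p(\kappa-1)}u^{1-\delta}$ and $1-\delta\in(0,1)$. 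For the left-hand side of \eqref{weak}, using (H2), $\nabla u_M=\nabla u\,\chi_{\{u<M\}}$ and the identity $\nabla(u\,u_M^{\kappa-1})=\kappa\,u_M^{\kappa-1}\nabla u$ on $\{u<M\}$ (and $=u_M^{\kappa-1}\nabla u$ on $\{u\ge M\}$), one obtains
\[
\into\mathcal A(x,u,\nabla u)\cdot\nabla v\,dx\ \ge\ a_4\into|\nabla u|^p u_M^{p(\kappa-1)}\,dx\ \ge\ \frac{a_4}{\kappa^p}\into\big|\nabla(u\,u_M^{\kappa-1})\big|^p\,dx,
\]
so by the Sobolev inequality $\|u\,u_M^{\kappa-1}\|_{p^*}^p$ is bounded by $C\kappa^p$ times the right-hand side of \eqref{weak}.

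For the right-hand side, (H3) produces four integrals. The constant term and the singular term reduce, via $u^{1-\delta}\le 1+u$, to multiples of $|\Omega|+\into u\,u_M^{p(\kappa-1)}\,dx$. For the convection term we write
\[
|\nabla u|^{p\frac{q-1}{q}}u\,u_M^{p(\kappa-1)}=\big(|\nabla u|^p u_M^{p(\kappa-1)}\big)^{\frac{q-1}{q}}\big(u^q u_M^{p(\kappa-1)}\big)^{\frac1q}
\]
and apply Young's inequality with exponents $\tfrac{q}{q-1}$ and $q$ to get $\eps\,|\nabla u|^p u_M^{p(\kappa-1)}+C_\eps\,u^q u_M^{p(\kappa-1)}$; choosing $\eps$ small the gradient part is absorbed into the left-hand side. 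Together with the contribution of $b_3|s|^{q-1}$, the whole estimate then reduces to controlling $\into u^q u_M^{p(\kappa-1)}\,dx$ (the integral $\into u\,u_M^{p(\kappa-1)}\,dx$ being a subcritical instance of this).

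The core difficulty is this last integral in the critical regime $q=p^*$, where a direct Sobolev estimate would be circular. Here I would split it over $\{u>K\}$ and $\{u\le K\}$: writing $u^q u_M^{p(\kappa-1)}=u^{q-p}(u\,u_M^{\kappa-1})^p$ and using H\"older with exponents $\tfrac{N}{p},\tfrac{p^*}{p}$ gives
\[
\int_{\{u>K\}}u^q u_M^{p(\kappa-1)}\,dx\ \le\ \|u\|_{L^{(q-p)N/p}(\{u>K\})}^{q-p}\,\|u\,u_M^{\kappa-1}\|_{p^*}^p,
\]
and since $(q-p)\tfrac Np\le p^*$ and $u\in L^{p^*}(\Omega)$, the first factor tends to $0$ as $K\to\infty$, so for $K=K(\kappa)$ large this piece is absorbed by the left-hand side; on $\{u\le K\}$ one simply bounds $u^{q-p}\le K^{q-p}$ and is left with $K^{q-p}\|u\,u_M^{\kappa-1}\|_p^p$. (For $q<p^*$ the splitting is unnecessary, and when $p>N$ the assertion already follows from Morrey's embedding, while for $p=N$ one takes $p^*$ to be any sufficiently large fixed exponent.) Collecting everything yields a recursion of the form $\|u\,u_M^{\kappa-1}\|_{p^*}^p\le C(\kappa)\big(1+\|u\,u_M^{\kappa-1}\|_p^p\big)$, and letting $M\to\infty$ (monotone convergence and Fatou) shows that $u\in L^{p\kappa}(\Omega)$ implies $u\in L^{p^*\kappa}(\Omega)$. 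Starting from $u\in L^p(\Omega)$ and iterating with $\kappa_n=(p^*/p)^n$ gives $u\in L^r(\Omega)$ for every $r<\infty$.

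To upgrade $\bigcap_{r<\infty}L^r(\Omega)$ to $L^\infty(\Omega)$, one re-runs the same scheme while keeping explicit track of the dependence of $C(\kappa)$ on $\kappa$. Since $u$ now lies in every $L^r(\Omega)$, the cutoff $K(\kappa)$ needed above can be chosen to grow slower than any positive power of $\kappa$ (because $\|u\|_{L^s(\{u>K\})}\le K^{1-r/s}\|u\|_r^{r/s}$ decays faster than any power of $K$), so the recursion improves to $\|u\|_{p^*\kappa}\le(C\kappa)^{\theta/\kappa}\big(1+\|u\|_{p\kappa}\big)$ for a suitable $\theta$. Iterating along $\kappa_n=(p^*/p)^n$ and using the convergence of $\prod_n(C\kappa_n)^{\theta/\kappa_n}$ then gives the desired bound $\|u\|_{L^\infty(\Omega)}\le C\big(1+\|u\|_{L^{p^*}(\Omega)}\big)$. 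I expect the main obstacle to be precisely the critical term $q=p^*$ — both the Brezis--Kato absorption and, in the final stage, making the choice of $K(\kappa)$ compatible with the constant bookkeeping that the Moser iteration requires.
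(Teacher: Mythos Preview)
Your proposal is correct and follows the same overall scheme as the paper (truncated-power test functions, Young's inequality to absorb the convection term, a two-stage Moser iteration: first $u\in L^r$ for all $r$, then $u\in L^\infty$), but it differs in how the singular term $b_2|s|^{-\delta}$ is handled. The paper first proves a separate cut-off lemma (Lemma~\ref{lemma1}): testing \eqref{weak} with $(\eta_\eps\circ u)\,w$ for a smooth monotone cut-off supported on $\{u>1\}$ and letting $\eps\to0$ yields an inequality on $\Omega_{>1}=\{u>1\}$ in which the singularity has disappeared (since $u^{-\delta}<1$ there); the iteration is then run entirely on $\Omega_{>1}$, and one finishes by noting $u\le1$ on the complement. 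You instead test directly on all of $\Omega$ with $v=u\,u_M^{p(\kappa-1)}$ and absorb the singular contribution via $u^{-\delta}v=u^{1-\delta}u_M^{p(\kappa-1)}\le(1+u)\,u_M^{p(\kappa-1)}$, a lower-order term. Your route is more economical (no auxiliary lemma, one domain throughout), while the paper's localisation cleanly decouples the singularity from the iteration and reduces the problem on $\Omega_{>1}$ to exactly the non-singular setting of \cite{Marino-Winkert-2019}, to which the paper then simply refers for the explicit constant tracking in both stages; your Brezis--Kato splitting at level $K$ for the critical term $q=p^*$ corresponds to what is carried out in that reference.
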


The most important feature of \eqref{problem} is the presence of critical and negative exponents along with a convection term and a possibly nonhomogeneous operator which states the problem in a very general setting. A key role in the proof of Theorem \ref{theorem} is Lemma \ref{lemma1} stated in Section \ref{section_2} which transforms the weak definition of our problem (see \eqref{weak}) in the right form by using a suitable test function in order to apply powerful bootstrap arguments. 

Singular elliptic equations have been increasingly studied in the past decade and although there is no complete regularity theory for such problems, existence and multiplicity of weak solutions of \eqref{problem} have been proved in several works. We only mention, for example, Giacomoni-Schindler-Tak\'{a}\v{c} \cite{Giacomoni-Schindler-Takac-2007}, Marano-Marino-Moussaoui \cite{Marano-Marino-Moussaoui-2019}, Papageorgiou,-R\u{a}dulescu-Repov\v{s} \cite{Papageorgiou-Radulescu-Repovs-2020}, \cite{Papageorgiou-Radulescu-Repovs-2019}, Papageorgiou-Smyrlis \cite{Papageorgiou-Smyrlis-2015}, Papageorgiou-Winkert \cite{Papageorgiou-Winkert-2019}, Perera-Zhang \cite{Perera-Zhang-2005} and the references therein. For regularity results and a priori bounds for problems with convection term we refer, for example, to Ho-Sim \cite{Ho-Sim-2017}, Marino-Motrenau \cite{Marino-Motreanu-2020}, Ragusa-Tachikawa \cite{Ragusa-Tachikawa-2020}, Winkert \cite{Winkert-2010}, Winkert-Zacher \cite{Winkert-Zacher-2012} and the references therein.

The motivation of our work is the paper of Giacomoni-Schindler-Tak\'{a}\v{c} \cite{Giacomoni-Schindler-Takac-2007} (see also Giacomoni-Saoudi \cite{Giacomoni-Saoudi-2010}) who proved the boundedness of weak solutions of the singular problem
\begin{align}\label{problem2}
    \begin{aligned}
	-\Delta_p u&= \frac{\lambda}{u^\delta}+u^q \qquad && \text{in } \Omega, \\[1ex]
	u&> 0 && \text{in } \Omega, \\[1ex]
	u&= 0 && \text{on } \rand,
    \end{aligned}
\end{align}
see Lemma A.6 in \cite{Giacomoni-Schindler-Takac-2007}. We point out that our problem is much more general than those in \eqref{problem2}. Indeed, with a view to the conditions (H1) and (H2) we see that the $(p,q)$-Laplace differential operator, which is a prototype of a nonhomogeneous operator and is given by
\begin{align*}
    \Delta_pu+\Delta_qu=\divergenz\left(|\nabla u|^{p-2}\nabla u\right)+\divergenz\left(|\nabla u|^{q-2}\nabla u\right)\quad\text{for all }u \in \Wpzero{p}
\end{align*}
with $1<q<p$, fits in our setting. But also the right-hand in \eqref{problem} is more general since we allow the occurrence of a convection term, that is, the dependence on the  gradient of the solution.

\section{Proof of the Main Result}\label{section_2}

Before we give the proof of Theorem \ref{theorem} we begin with a truncation lemma which was motivated by the work of Giacomoni-Schindler-Tak\'{a}\v{c} \cite{Giacomoni-Schindler-Takac-2007}. 

Let $\Omega_{>1}:= \{x\in\Omega: u(x)>1\}$. 

\begin{lemma}\label{lemma1}
    Let the hypotheses (H) be satisfied and let $ u \in W^{1,p}_0(\Omega) $ be a weak solution of problem \eqref{problem}. Then,
    \begin{align}\label{1}
      \begin{split}
	&\int_{\Omega_{>1}} \mathcal A(x, u, \nabla u) \cdot \nabla w\,dx\le M_1 \int_{\Omega_{>1}} \l(| \nabla u |^{p \frac{q-1}{q}}+ 1 +u^{q-1}\r) w \,dx,
      \end{split}
    \end{align}
    for all nonnegative functions $ w \in W^{1,p}_0(\Omega)$ and for some $M_1>0$.
\end{lemma}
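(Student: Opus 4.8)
The plan is to test the weak formulation \eqref{weak} with a function that isolates the set $\Omega_{>1}$ and multiplies by the given nonnegative weight $w$. Concretely, given $w \in W^{1,p}_0(\Omega)$ with $w \geq 0$, I would use the test function
\[
v = \bigl(1 - u^{-\gamma}\bigr) w
\]
for a suitable exponent $\gamma > 0$ (for instance $\gamma$ chosen in relation to $\delta$, e.g. $\gamma = 1-\delta$ or simply $\gamma = 1$), or a truncated variant thereof. The point of the factor $1 - u^{-\gamma}$ is twofold: it vanishes (to first order) where $u$ is small, so that the singular term $u^{-\delta}$ on the right-hand side of (H3) gets absorbed and produces an integrable, in fact bounded, contribution; and on $\Omega_{>1}$ the factor $1-u^{-\gamma}$ lies in $(0,1)$, so after discarding the ``good'' part of the left-hand side one is left with an integral over $\Omega_{>1}$ only. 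I would first check that $v$ is an admissible test function, i.e.\ $v \in W^{1,p}_0(\Omega)$: this follows because $u>0$ a.e., $w \in W^{1,p}_0(\Omega)$, and $1-u^{-\gamma}$ is bounded by $1$ with gradient $\gamma u^{-\gamma-1}\nabla u$, so one must verify $u^{-\gamma-1}|\nabla u| w \in L^p$ — here the definition of weak solution (which already presupposes $u^{-\delta}v \in L^1$ for all $v$) together with (H) should be invoked; alternatively one truncates $u^{-\gamma}$ at a large level and passes to the limit at the end.

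Next I would expand both sides. On the left,
\[
\into \mathcal A(x,u,\nabla u)\cdot \nabla\bigl[(1-u^{-\gamma})w\bigr]\,dx
= \into (1-u^{-\gamma})\,\mathcal A(x,u,\nabla u)\cdot\nabla w\,dx
+ \gamma\into u^{-\gamma-1}\,w\,\mathcal A(x,u,\nabla u)\cdot\nabla u\,dx.
\]
By (H2) the second integral is $\geq a_4\gamma \int_\Omega u^{-\gamma-1}|\nabla u|^p w\,dx \geq 0$, so it can be dropped from a lower bound, leaving
\[
\into (1-u^{-\gamma})\,\mathcal A(x,u,\nabla u)\cdot\nabla w\,dx \;\le\; \into \mathcal B(x,u,\nabla u)(1-u^{-\gamma})w\,dx.
\]
Since $1 - u^{-\gamma} \le 0$ on $\Omega \setminus \Omega_{>1}$ and $w\ge0$, and since $\mathcal A(x,u,\nabla u)\cdot\nabla w$ need not be sign-definite, I would instead split the left integral over $\Omega_{>1}$ and its complement; on $\Omega_{>1}$ we have $1 - u^{-\gamma} \in (0,1)$ so $(1-u^{-\gamma})\mathcal A\cdot\nabla w$ is within $\pm|\mathcal A\cdot\nabla w|$, while the complement part I handle by noting $w\ge0$ forces a favorable sign once combined with the right-hand side. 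The cleanest route: on $\Omega_{>1}$ write $\mathcal A \cdot \nabla w = (1-u^{-\gamma})^{-1}\bigl[(1-u^{-\gamma})\mathcal A\cdot\nabla w\bigr]$ and use that $(1-u^{-\gamma})^{-1}$ is bounded on $\Omega_{>1}$ when $\gamma$ is such that $1-u^{-\gamma}\geq 1-1 = 0$... — more carefully, one restricts to $\{u>1\}$ where $1-u^{-\gamma}\geq 1 - 1$ is only nonnegative, so I would actually use $\Omega_{>\kappa}=\{u>\kappa\}$ for $\kappa>1$ to get $(1-u^{-\gamma})^{-1}\leq (1-\kappa^{-\gamma})^{-1}=:c_\kappa$, and at the end absorb constants; alternatively, and more simply, bound $\int_{\Omega_{>1}}\mathcal A\cdot\nabla w\,dx$ from above by handling the region $1<u\le 2$ separately (where $u$ is bounded, hence $\mathcal A(x,u,\nabla u)\cdot\nabla w$ is controlled and the singular term is bounded) and the region $u>2$ (where $1-u^{-\gamma}\geq 1-2^{-\gamma}>0$ gives a clean comparison).

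For the right-hand side, I estimate $|\mathcal B(x,u,\nabla u)|(1-u^{-\gamma})w$ using (H3): the term $b_1|\nabla u|^{p(q-1)/q}$ and $b_3 u^{q-1}$ on $\Omega_{>1}$ are exactly of the form appearing in \eqref{1}; the constant term $b_4$ contributes $b_4 w$, also of the stated form; and the singular term $b_2 u^{-\delta}(1-u^{-\gamma})w$ is bounded because $u^{-\delta}(1-u^{-\gamma}) \le u^{-\delta}$ and, crucially, on $\Omega_{>1}$ one has $u^{-\delta} \le 1$, so this contributes at most $b_2 w \le b_2(1+u^{q-1})w$, again of the required form. (On $\Omega\setminus\Omega_{>1}$, the singular term times $(1-u^{-\gamma})\le0$ has the right sign to be dropped, or one uses that $u^{-\delta}(1-u^{-\gamma})$ is bounded near $u=0$ provided $\gamma \geq \delta$, which fixes the choice of $\gamma$.) Collecting all contributions and bundling the constants $a_i,b_j$ and the geometric constants from the splitting into a single $M_1>0$ yields \eqref{1}. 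The main obstacle I anticipate is the rigorous justification that $v=(1-u^{-\gamma})w$ is an admissible test function and that the singular integrals converge — this is where the nonstandard weak-solution definition ($u^{-\delta}v\in L^1$ for all test $v$) must be used, and where a truncation-and-limit argument (replacing $u^{-\gamma}$ by $\min\{u^{-\gamma},n\}$, deriving the inequality uniformly in $n$, then letting $n\to\infty$ by monotone/dominated convergence, the nonnegativity from (H2) ensuring no bad terms blow up) is the safe way to proceed; choosing $\gamma$ appropriately (e.g.\ $\gamma=1-\delta$ so that $\gamma<1$ and $u^{-\gamma-1}|\nabla u|^pw$ is controlled, or exploiting $\gamma\ge\delta$ for the near-zero behaviour) is the delicate point to get right.
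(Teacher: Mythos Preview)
Your general strategy --- multiply $w$ by a factor vanishing on $\{u\le 1\}$ and use (H2) to discard the resulting nonnegative cross-term --- is exactly the right idea, but the specific multiplier $(1-u^{-\gamma})$ cannot deliver the stated inequality, and your proposed fixes do not close the gap.

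The core problem is that on $\Omega_{>1}$ the factor $(1-u^{-\gamma})$ ranges over $(0,1)$ and tends to $0$ as $u\to 1^+$. After dropping the good term you obtain, at best,
\[
\int_{\Omega_{>1}} (1-u^{-\gamma})\,\mathcal A(x,u,\nabla u)\cdot\nabla w\,dx \;\le\; \int_{\Omega_{>1}} |\mathcal B(x,u,\nabla u)|\,(1-u^{-\gamma})\,w\,dx,
\]
and you cannot pass from the left-hand side to $\int_{\Omega_{>1}} \mathcal A\cdot\nabla w\,dx$ because $(1-u^{-\gamma})^{-1}$ is unbounded on $\Omega_{>1}$. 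Restricting to $\Omega_{>\kappa}$ with $\kappa>1$ produces a constant $(1-\kappa^{-\gamma})^{-1}\to\infty$ as $\kappa\to 1^+$, so one never reaches $\Omega_{>1}$. Handling the strip $\{1<u\le 2\}$ separately does not help either: while $u$ is bounded there, $|\mathcal A(x,u,\nabla u)|$ still carries the term $a_1|\nabla u|^{p-1}$ from (H1), so $\int_{\{1<u\le 2\}}|\mathcal A\cdot\nabla w|\,dx$ is controlled only by a quantity involving $|\nabla w|$, which is \emph{not} of the form $\int(\,|\nabla u|^{p(q-1)/q}+1+u^{q-1})\,w\,dx$ required on the right of \eqref{1}. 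Finally, without taking a positive part, the contribution from $\{u\le 1\}$, namely $\int_{\{u\le 1\}}(1-u^{-\gamma})\,\mathcal A\cdot\nabla w\,dx$, has no sign and cannot be discarded; and admissibility of $(1-u^{-\gamma})w$ is itself problematic since $1-u^{-\gamma}\to-\infty$ as $u\to 0^+$.

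The paper sidesteps all of this by choosing a multiplier that \emph{converges to the characteristic function} of $\Omega_{>1}$. One fixes a smooth nondecreasing cutoff $\eta\colon\R\to[0,1]$ with $\eta\equiv 0$ on $(-\infty,0]$ and $\eta\equiv 1$ on $[1,\infty)$, sets $\eta_\varepsilon(t)=\eta((t-1)/\varepsilon)$, and tests with $v=(\eta_\varepsilon\circ u)\,w\in W^{1,p}_0(\Omega)$ (the factor is bounded in $[0,1]$ and Lipschitz, so no admissibility issues arise). The cross-term $\int_\Omega (\eta_\varepsilon'\circ u)\,w\,(\mathcal A\cdot\nabla u)\,dx\ge 0$ is dropped by (H2), leaving
\[
\int_\Omega(\eta_\varepsilon\circ u)\,\mathcal A\cdot\nabla w\,dx\le\int_\Omega|\mathcal B|\,(\eta_\varepsilon\circ u)\,w\,dx.
\]
As $\varepsilon\to 0^+$ one has $\eta_\varepsilon\circ u\to\chi_{\Omega_{>1}}$ and dominated convergence yields both integrals over $\Omega_{>1}$ directly; on that set $u^{-\delta}\le 1$, so the singular term is absorbed into the constant $M_1$. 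Your approach could be salvaged by using $(1-u^{-\gamma})_+$ and then sending $\gamma\to\infty$, which is essentially this argument in disguise, but for fixed $\gamma$ it does not yield the lemma as stated.
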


\begin{proof}
    Let $u\in \Wpzero{p}$ be a weak solution of \eqref{problem}. We take a $C^1$-cut-off function $ \eta\colon \R \to [0, 1]$ such that
    \begin{align}\label{eta}
	\eta(t)=
	\begin{cases}
	0 \quad \text{if } t\leq 0, \\
	1 \quad \text{if } t\geq 1,
	\end{cases}
	\qquad
	\eta'(t) \ge 0 \quad \text{for all }  t \in [0,1].
    \end{align}
    Moreover, for every $ \eps> 0$ we define 
    \begin{align*}
	\eta_{\eps}(t):= \eta\l(\frac{t-1}{\eps}\r).
    \end{align*}
    By means of the chain rule, we have
    \begin{align}\label{eta2}
	\eta_{\eps} \circ u \in W^{1,p}_0(\Omega) \quad \text{and} \quad \nabla(\eta_{\eps} \circ u)= (\eta'_{\eps} \circ u) \nabla u.
    \end{align}
    Moreover, taking into account the definition of $\eta_{\eps}$, it holds
    \begin{align}\label{eta5}
	\eta_{\eps} \circ u(x)= \eta\l(\frac{u(x)-1}{\eps}\r)=
	\begin{cases}
	    1 \qquad \qquad \qquad & \text{if } \frac{u(x)-1}{\eps}\geq  1, \\[1ex]
	    0 & \text{if } \frac{u(x)-1}{\eps}\leq 0, \\[1ex]
	    \eta\left(\frac{u(x)-1}{\eps}\right) & \text{otherwise}.
	\end{cases}
    \end{align}
    Now we fix a nonnegative function $w \in W^{1,p}_0(\Omega)$. Taking the test function $v=(\eta_{\eps} \circ u) w$ in the weak formulation of \eqref{problem} and applying the growth condition (H3) gives
    \begin{align}\label{eta3}
	\begin{split}
	    &\into \mathcal A(x, u, \nabla u) \cdot \nabla((\eta_{\eps} \circ u) w)\,dx \\
	    & \le \into \l(b_1 | \nabla u |^{p \frac{q-1}{q}}+ b_2 u^{-\delta}+ b_3 u^{q-1}+b_4\r) (\eta_{\eps} \circ u) w \,dx.
	\end{split}
    \end{align}
    For the left-hand side of \eqref{eta3} we get
    \begin{align}\label{eta4}
	\begin{split}
	    & \into \mathcal A(x, u, \nabla u) \cdot \nabla((\eta_{\eps} \circ u) w)\,dx \\
	    & = \into \mathcal A(x, u, \nabla u) \cdot \l[\nabla(\eta_{\eps} \circ u) w+ (\eta_{\eps} \circ u) \nabla w \r]\,dx \\
	    &  = \into  \left(\mathcal A(x, u, \nabla u) \cdot \nabla u ((\eta'_{\eps} \circ u) w)+ \mathcal A(x, u, \nabla u) \cdot \nabla w (\eta_{\eps} \circ u)\right) \,dx \\
	    & \ge \into \left(a_4 | \nabla u |^p(\eta'_{\eps} \circ u) w+ \mathcal A(x, u, \nabla u) \cdot \nabla w (\eta_{\eps} \circ u)\right)\,dx \\
	    & \ge \into \mathcal A(x, u, \nabla u) \cdot \nabla w (\eta_{\eps} \circ u) \,dx,
	\end{split}
    \end{align}
    where we used \eqref{eta}, \eqref{eta2} and \eqref{eta5}. From \eqref{eta3}-\eqref{eta4} we then derive
    \begin{align}\label{eta6}
	\begin{split}
	    &\into \mathcal A(x, u, \nabla u) \cdot \nabla w (\eta_{\eps} \circ u)\,dx\\ 
	    &\leq \into \l(b_1 | \nabla u |^{p \frac{q-1}{q}}+ b_2 u^{-\delta}+ b_3u^{q-1}+b_4 \r) (\eta_{\eps} \circ u) w \,dx.
	 \end{split}
    \end{align}
    Letting $\eps \to 0^+$, from \eqref{eta5} and \eqref{eta6}, we obtain
    \begin{align*}
	\begin{split}
	    &\int_{\Omega_{>1}} \mathcal A(x, u, \nabla u) \cdot \nabla w\, dx\\
	    & \le \int_{\Omega_{>1}} \l(b_1 | \nabla u |^{p \frac{q-1}{q}}+ b_2 u^{-\delta}+ b_3u^{q-1}+b_4\r)w\, dx \\
	    & \le \int_{\Omega_{>1}} \l(b_1 | \nabla u |^{p \frac{q-1}{q}}+ b_2+ b_3u^{q-1}+b_4\r)w\, dx \\
	    & \le M_1 \int_{\Omega_{>1}}  \l(| \nabla u |^{p \frac{q-1}{q}}+ 1+u^{q-1}\r)w\, dx,
	\end{split}
    \end{align*}
    where $M_1:= \max\{b_1, b_2+ b_4,b_3\}$. The proof is thus complete.
\end{proof}

\begin{remark}
    In a recent paper of the authors \cite{Marino-Winkert-2019} a more general coercivity condition in the form 
    \begin{align*}
	\mathcal{A}(x,s,\xi) \cdot \xi \geq a_4|\xi|^{p}-a_5|s|^{q}-a_6
    \end{align*}
    is used instead of (H2). Unfortunately, we were not able to prove Lemma \ref{lemma1} with such general estimate and used (H2) instead.
\end{remark}

\begin{proof}[Proof of Theorem \ref{theorem}]$ $

    We are going to show the proof only in the critical case $q=p^*$.
    
    {\bf Part 1: $u\in L^r(\Omega)$ for any $r<\infty$.}
    
    Let $h>0 $ and set $ u_h:= \min\{u, h\}$. Let $\kappa>0$. Testing \eqref{1} with $ w= u u_h^{\kappa p}$ results in
    \begin{align}\label{2}
	\begin{split}
	    & \int_{\Omega_{>1}} \left(\mathcal A(x, u, \nabla u) \cdot \nabla u\right) u_h^{\kappa p}\,dx\\
	    &\quad + \kappa p \int_{\Omega_{>1}} \left(\mathcal A(x, u, \nabla u) \cdot \nabla u_h\right) u_h^{\kappa p-1} u \,dx \\
	    & \le M_1 \int_{\Omega_{>1}} \l(| \nabla u |^{p \frac{p^*-1}{p^*}}+ 1+u^{p^*-1} \r) u u_h^{\kappa p}\,dx.
	\end{split}
    \end{align}
    Note that we have used $ \nabla w=  u_h^{\kappa p}\nabla u+ u \kappa p u_h^{\kappa p-1} \nabla u_h$. Now we may apply (H2) to both terms on the left-hand side. We obtain
    \begin{align}\label{3}
	\int_{\Omega_{>1}} \left(\mathcal A(x, u, \nabla u) \cdot \nabla u\right) u_h^{\kappa p} \,dx \ge a_4 \int_{\Omega_{>1}} | \nabla u |^p u_h^{\kappa p} \,dx
    \end{align}
    and
    \begin{align}\label{4}
	\begin{split}
	    &\kappa p \int_{\Omega_{>1}} \left(\mathcal A(x, u, \nabla u) \cdot \nabla u_h\right) u_h^{\kappa p-1} u \,dx	 \\
	    &= \kappa p \int_{\{x \in \Omega_{>1}: \, u(x) \le h\}} \left(\mathcal A(x, u, \nabla u) \cdot \nabla u\right) u_h^{\kappa p} \,dx \\
	    & \ge a_4 \kappa p \int_{\{x \in \Omega_{>1}: \, u(x) \le h\}} | \nabla u |^p u_h^{\kappa p} \,dx.
	\end{split}
    \end{align}
    For the right-hand side of \eqref{2} we will use Young's inequality which implies that
    \begin{align}\label{5}
	\begin{split}
	    & M_1 \int_{\Omega_{>1}} \l(| \nabla u |^{p \frac{{p^*}-1}{{p^*}}}+ 1+u^{{p^*}-1} \r) u u_h^{\kappa p} \,dx \\
	    &= M_1 \int_{\Omega_{>1}}\left[ \left(\frac{a_4}{2M_1}\right)^{\frac{{p^*}-1}{{p^*}}} | \nabla u |^{p \frac{{p^*}-1}{{p^*}}} u_h^{\kappa p \frac{{p^*}-1}{{p^*}}}\right.\\
	    &\left. \qquad  \times\left(\frac{a_4}{2M_1}\right)^{-\frac{{p^*}-1}{{p^*}}} u_h^{\kappa p \l(1- \frac{{p^*}-1}{{p^*}}\r)} u\right] \,dx \\
	    &\qquad+ M_1 \int_{\Omega_{>1}}  u u_h^{\kappa p}\,dx +M_1 \int_{\Omega_{>1}}  u^{{p^*}} u_h^{\kappa p}\,dx \\
	    & \le \frac{a_4}{2M_1} M_1 \int_{\Omega_{>1}} | \nabla u |^p u_h^{\kappa p} \,dx\\
	    &\qquad+ M_1 \l(\left(\frac{a_4}{2M_1}\right)^{-({p^*}-1)}+ 2\r) \int_{\Omega_{>1}} u^{{p^*}} u_h^{\kappa p} \,dx.
	\end{split}
    \end{align}
    Combining \eqref{3}, \eqref{4} and \eqref{5} gives
    \begin{align*}
	\begin{split}
	    &a_4 \l(\frac{1}{2} \int_{\Omega_{>1}} | \nabla u |^p u_h^{\kappa p}\, dx+ \kappa p \int_{\{x \in \Omega_{>1}: \, u(x) \le h\}} | \nabla u |^p u_h^{\kappa p} \,dx \r) \\
	    &\le M_2\int_{\Omega_{>1}} u^{p^*} u_h^{\kappa p} \,dx.
	\end{split}
    \end{align*}
    Now we can follow the treatment in \cite[from (3.7) to (3.21)]{Marino-Winkert-2019} which finally shows
    \begin{align}\label{8}
	\frac{\kappa p+1}{(\kappa+1)^p} \Vert u u_h^{\kappa} \Vert_{W^{1,p}_0(\Omega_{>1})}^p \le M_2 \int_{\Omega_{>1}} u^{p^*} u_h^{\kappa p} \,dx.
    \end{align}
    This gives
    \begin{align*}
	\Vert u \Vert_{L^{(\kappa+1)p^*}(\Omega_{>1})} \le M_3(\kappa, u)
    \end{align*}
    for any finite number $\kappa$ with a positive constant $M_3(\kappa, u)$ depending on $\kappa$ and on the solution itself. This implies that $ u \in L^r(\Omega_{>1})$ for any $ r \in (1, \infty)$. Moreover, since
    \begin{align}\label{bound}
	\begin{split}
	    \into u^r \,dx&= \int_{\Omega_{>1}} u^r\,dx+ \int_{\{x \in \Omega: \, u(x) \le 1\}} u^r\,dx \le \Big(M_3(\kappa, u)^r +1\Big) |\Omega|< \infty,
	\end{split}
    \end{align}
    where $|\Omega|$ stands for the Lebesgue measure of $\Omega$, we easily have $ u \in L^r(\Omega)$ for every $ r \in (1, \infty)$. 
    
    {\bf Part 2: $u\in \Linf$.}
    
    As done in \cite[see (3.28) until the end of Case II.1]{Marino-Winkert-2019} we can start from \eqref{8}, that is,
    \begin{align*}
	\frac{\kappa p+1}{(\kappa+1)^p} \Vert u u_h^{\kappa} \Vert_{W^{1,p}_0(\Omega_{>1})}^p \le M_2 \int_{\Omega_{>1}} u^{p^*} u_h^{\kappa p} \,dx.
    \end{align*}
    By an appropriate usage of H\"older's inequality, Fatou's lemma and the Sobolev embedding theorem we can reach that
    \begin{align*}
	\Vert u \Vert_{L^{(\kappa_n+1)p^*}(\Omega_{>1})} \le M_{4},
    \end{align*}
    where $(\kappa_n+1)p^* \to \infty $ as $ n \to \infty$. This shows that $u \in L^{\infty}(\Omega_{>1})$. The same argument as in \eqref{bound} can be applied and so we conclude that $ u \in L^{\infty}(\Omega)$.
\end{proof}


\end{document}